\nonstopmode \numberwithin{equation}{section}
\newtheorem{theorem}{Theorem}
\newtheorem{corollary}{Corollary}[section]
\begin{document}
\title{On Fractional Kinetic Equations $\mathtt{k}$-Struve Functions Based Solutions}
\author{K.S. Nisar, S.R. Mondal, F.B.M Belgacem}
\address{K. S. Nisar : Department of Mathematics, College of Arts and
Science, Prince Sattam bin Abdulaziz University, Wadi Aldawaser, Riyadh
region 11991, Saudi Arabia}
\email{ksnisar1@gmail.com, n.sooppy@psau.edu.sa }
\address{S. R. Mondal: Department of Mathematics and statistics,King Faisal University, 
Al Ahsa, Saudi Arabia}
\email{saiful786@gmail.com}

\address{F.B.M. Belgacem: Department of Mathematics,  Faculty of Basic Education, PAAET, 
Al-Ardhiya, Kuwait}
\email{fbmbelgacem@gmail.com}

\keywords{Fractional kinetic equations, Laplace transforms,  $\mathtt{k}$-Struve function, Fractional calculus .}
\subjclass[2010]{26A33, 44A10, 44A20, 33E12}

\begin{abstract}
 In the present research article, we investigate solutions for fractional kinetic equations, involving $\mathtt{k}$-Struve functions, some of the salient properties of which we present.  The method used is Laplace transform based.  the methodology and results can be adopted and extended to various related fractional problems in mathematical physics.
\end{abstract}
\maketitle
\section{Introduction}
Introduced by Hermann Struve in 1882, complex $p$  indexed functions with the same name, denoted and defined by
\begin{equation}
H_{p}\left( x\right) :=\sum_{k=0}^{\infty }\frac{\left( -1\right) ^{k}}{%
\Gamma \left( k+3/2\right) \Gamma \left( k+p+\frac{3}{2}\right) }\left( 
\tfrac{x}{2}\right) ^{2k+p+1},  \label{Struve-1}
\end{equation}

turn out to be particular solutions of the non-homogeneous Bessel differential equations, given by, 
\begin{equation}
x^{2}y^{^{\prime \prime }}\left( x\right) +xy^{^{\prime }}\left( x\right)
+\left( x^{2}-p^{2}\right) y\left( x\right) =\frac{4\left( \frac{x}{2}%
\right) ^{p+1}}{\sqrt{\pi }\Gamma \left( p+1/2\right) }.  \label{bde-1}
\end{equation}
The homogeneous version of \eqref{bde-1} have Bessel functions of the first kind, denoted as $J_{p}(x)$, for solutions, which are finite at $x = 0$, when $p$ a positive fraction and all integers \cite{Belgacem2010}, while tend diverge for negative fractions,$p$. The Struve functions occur in certain areas of physics and applied mathematics, for example, in water-wave and surface-wave problems \cite{Ahmadi,Hirata}, as well as in problems on unsteady aerodynamics \cite{Shaw}. The Struve functions are also important in particle quantum dynamical studies of spin
decoherence \cite{Shao} and nanotubes \cite{Pedersen}. For more details about Struve functions, their generalizations and properties, the esteemed reader is invited to consider references, \cite{Orhan,Yagmur, Bhowmick, Bhowmick2, Kant, Singh1, Singh2, Singh3, Singh4, Singh5}.
Recently, Nisar et al. \cite{Nisar-Saiful} introduced and studied various properties of $\mathtt{k}$-Struve function $\mathtt{S}_{\nu,c}^{\mathtt{k}}$ defined by
\begin{equation}\label{k-Struve}
\mathtt{S}_{\nu,c}^{\mathtt{k}}(x):=\sum_{r=0}^{\infty}\frac{(-c)^r}
{\Gamma_{\mathtt{k}}(r\mathtt{k}+\nu+\frac{3\mathtt{k}}{2})\Gamma(r+\frac{3}{2})}
\left(\frac{x}{2}\right)^{2r+\frac{\nu}{\mathtt{k}}+1}.
\end{equation}
In this paper, we consider \eqref{k-Struve} to obtain the solution of the fractional kinetic equations. 
Our methodology herein is Laplace transform based, and we plan to extend our findings in a future work by using the Sumudu transform,\cite{Belgacem2006a,Belgacem2006b}.  
Fractional calculus is developed to large area of mathematics physics and other engineering applications \cite{Gupta,Gupta,Nisar,Saichev,Saxena1,Saxena2,Saxena3,Saxena4,Zaslavsky,Katatbeh-Belgacem} because of its importance and efficiency.
The fractional differential equation between a chemical reaction or a production scheme (such as in birth-death processes) was established and treated by Haubold and Mathai \cite{Haubold}, (also see \cite{Belgacem2003,Chaurasia,Gupta}). 

\section{Solution of generalized fractional Kinetic equations for $\mathtt{k}$-Struve function}
\label{Sec2}

Let the arbitrary reaction described by a time-dependent quantity $N=\left( N_{t}\right) $. The rate of change $\frac{dN}{dt}$ to be a balance between the destruction rate $\mathfrak{d}$ and the production rate $\mathfrak{p}$ of N, that is, $\frac{%
dN}{dt}=-\mathfrak{d}+\mathfrak{p}$. Generally, destruction and production depend on the quantity N itself, that is, 
\begin{equation}
\frac{dN}{dt}=-\mathfrak{d}\left( N_{t}\right) +\mathfrak{p}\left( N_{t}\right)
\label{eqn-6-Struve}
\end{equation}
where $N_{t}$ described by $N_{t}\left( t^{\ast }\right)
=N\left( t-t^{\ast }\right) ,t^{\ast }>0$
The special case of ($\ref{eqn-6-Struve}$) was found in \cite{Haubold} in the following form, 
\begin{equation}
\frac{dN_{i}}{dt}=-c_{i}N_{i}\left( t\right)  \label{eqn-7-Struve}
\end{equation}
with $N_{i}\left( t=0\right) =N_{0}$, which is the number of density of species $i$ at time $t=0$ and $c_{i}>0$.
The solution of ($\ref{eqn-7-Struve}$) is easily seen to be given by 
\begin{equation}
N_{i}\left( t\right)=N_{0}e^{-c_{i}t}  \label{eqn-7a-Struve}
\end{equation}
Integration gives another form of ($\ref{eqn-7-Struve}$) as follows: 
\begin{equation}
N\left( t\right) -N_{0}=-c.~_{0}D_{t}^{-1}N\left( t\right)
\label{eqn-8-Struve}
\end{equation}
where $_{0}D_{t}^{-1}$ is the special case of the Riemann-Liouville integral
operator and c is a constant. The fractional generalization of ($\ref{eqn-8-Struve}$) is given by \cite%
{Haubold} as: 
\begin{equation}
N\left( t\right) -N_{0}=-c_{0}^{\upsilon }D_{t}^{-\upsilon}N\left( t\right)
\label{eqn-9-Struve}
\end{equation}
where $_{0}D_{t}^{-\upsilon }$ defined as 
\begin{equation}
_{0}D_{t}^{-\upsilon }f\left( t\right) =\frac{1}{\Gamma \left( \upsilon
\right) }\int\limits_{0}^{t}\left( t-s\right) ^{\upsilon -1}f\left( s\right)
ds,\mathcal{R}\left( \upsilon \right) >0  \label{eqn-9a-Struve}
\end{equation}
Suppose that $f(t)$ is a real or complex valued function of the (time)
variable $t > 0$ and s is a real or complex parameter. The Laplace transform
of $f(t)$ is defined by 
\begin{equation}
F\left( p\right) =L\left[ f(t):p\right] =\int\limits_{0}^{\infty
}e^{-pt}f\left( t\right) dt,\text{ \ \ }\mathcal{R}\left( p\right) >0
\label{eqn-13-Struve}
\end{equation}
The Mittag-Leffler function $%
E_\alpha\left(z\right)$ (see \cite{Mittag}) is defined by 
\begin{equation}
E_{\alpha}\left( z\right) =\sum_{n=0}^{\infty }\frac{z^{n}}{\Gamma \left(
\alpha n+1 \right) }\text{ \ \ }\left(z,\alpha \in \mathbb{C}; |z|<0,%
\mathcal{R}\left( \alpha\right)>0\right) .  \label{Mittag1}
\end{equation}
and $E_{\alpha ,\beta }\left( x\right)$ defined by \cite{Wiman} as 
\begin{equation}
E_{\alpha ,\beta }\left( x\right) =\sum_{n=0}^{\infty }\frac{x^{n}}{\Gamma
\left( \alpha n+\beta \right) }.  \label{Mittag2}
\end{equation}

\begin{theorem}
\label{Th1}If $d>0,\upsilon >0, l, c, t\in \mathbb{C}$ and $l>-\frac{3}{2}k$ then the solution of generalized fractional kinetic equation%
\begin{equation}
N\left( t\right) -N_{0}~S_{l,c}^{\mathtt{k}}\left( t\right) :=-d^{\upsilon }\text{ }%
_{0}D_{t}^{-\upsilon }N\left( t\right) ,  \label{eqn-14-Struve}
\end{equation}
is given by the following formula%
\begin{equation}
N\left( t\right) =N_{0}\sum_{r=0}^{\infty }\frac{\left( -c\right) ^{r}\Gamma
\left( 2r+\frac{l}{\mathtt{k}}+2\right) }{\Gamma_{\mathtt{k}}\left( r\mathtt{k}+l+\frac{3}{2}\mathtt{k}\right) \Gamma \left(
r+\frac{3}{2}\right)}\left( \frac{t}{2}\right) ^{2r+\frac{l}{\mathtt{k}}+1}E_{v,2r+\frac{l}{\mathtt{k}}+2}\left(
-d^{\upsilon }t^{\upsilon }\right) .  \label{eqn-15-Struve}
\end{equation}
where $E_{v,2r+\frac{l}{\mathtt{k}}+2}\left( .\right) $ is the generalized Mittag-Leffler
function in \eqref{Mittag2}
\end{theorem}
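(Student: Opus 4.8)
The plan is to pass to the Laplace domain, where the Riemann--Liouville integral operator $_0D_t^{-\upsilon}$ acts as multiplication by $p^{-\upsilon}$, convert the kinetic equation into an algebraic relation, and then invert. First I would apply the transform $L[\cdot:p]$ of \eqref{eqn-13-Struve} to \eqref{eqn-14-Struve}. Writing $N(p):=L[N(t):p]$ and using the standard rule $L[\,_0D_t^{-\upsilon}N(t):p]=p^{-\upsilon}N(p)$ (which follows from the convolution form \eqref{eqn-9a-Struve}), the equation collapses to
\[
N(p)=N_{0}\,L[S_{l,c}^{\mathtt{k}}(t):p]-d^{\upsilon}p^{-\upsilon}N(p).
\]

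Next I would evaluate $L[S_{l,c}^{\mathtt{k}}(t):p]$ directly from the series \eqref{k-Struve}. Integrating term by term and invoking $L[t^{s}:p]=\Gamma(s+1)/p^{s+1}$ on each monomial $t^{2r+\frac{l}{\mathtt{k}}+1}$ yields
\[
L[S_{l,c}^{\mathtt{k}}(t):p]=\sum_{r=0}^{\infty}\frac{(-c)^{r}\,\Gamma\!\left(2r+\tfrac{l}{\mathtt{k}}+2\right)}{\Gamma_{\mathtt{k}}\!\left(r\mathtt{k}+l+\tfrac{3\mathtt{k}}{2}\right)\Gamma\!\left(r+\tfrac{3}{2}\right)2^{\,2r+\frac{l}{\mathtt{k}}+1}}\;\frac{1}{p^{\,2r+\frac{l}{\mathtt{k}}+2}}.
\]

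Solving the algebraic relation gives $N(p)=N_{0}\,(1+d^{\upsilon}p^{-\upsilon})^{-1}L[S_{l,c}^{\mathtt{k}}(t):p]$, and expanding the resolvent as a geometric series $(1+d^{\upsilon}p^{-\upsilon})^{-1}=\sum_{n=0}^{\infty}(-d^{\upsilon})^{n}p^{-\upsilon n}$ produces a double series whose every term is a pure power $p^{-(2r+\frac{l}{\mathtt{k}}+2+\upsilon n)}$. Inverting term by term via $L^{-1}[p^{-s}:t]=t^{s-1}/\Gamma(s)$ and absorbing $t^{2r+\frac{l}{\mathtt{k}}+1}/2^{\,2r+\frac{l}{\mathtt{k}}+1}$ into $(t/2)^{2r+\frac{l}{\mathtt{k}}+1}$ leaves
\[
N(t)=N_{0}\sum_{r=0}^{\infty}\frac{(-c)^{r}\,\Gamma\!\left(2r+\tfrac{l}{\mathtt{k}}+2\right)}{\Gamma_{\mathtt{k}}\!\left(r\mathtt{k}+l+\tfrac{3\mathtt{k}}{2}\right)\Gamma\!\left(r+\tfrac{3}{2}\right)}\left(\frac{t}{2}\right)^{2r+\frac{l}{\mathtt{k}}+1}\sum_{n=0}^{\infty}\frac{(-d^{\upsilon}t^{\upsilon})^{n}}{\Gamma\!\left(\upsilon n+2r+\tfrac{l}{\mathtt{k}}+2\right)}.
\]
The last step is to recognize the inner $n$-sum as the generalized Mittag--Leffler function $E_{\upsilon,\,2r+\frac{l}{\mathtt{k}}+2}\!\left(-d^{\upsilon}t^{\upsilon}\right)$ of \eqref{Mittag2}, which produces exactly \eqref{eqn-15-Struve}.

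The computation is essentially routine once the transform rules are in place; the genuine work is analytic bookkeeping rather than a single decisive idea. The step I expect to require the most care is the repeated interchange of summation with the Laplace integrals---once to transform the $\mathtt{k}$-Struve series and again to invert the double series term by term. I would justify these by establishing absolute and uniform convergence on the relevant half-plane and time interval, using the rapid growth of $\Gamma_{\mathtt{k}}$ and $\Gamma$ in the index $r$ to dominate the tails. The hypothesis $l>-\tfrac{3}{2}\mathtt{k}$ enters here, since it keeps the exponents $2r+\frac{l}{\mathtt{k}}+1$ positive and the arguments of every Gamma factor in the admissible range, so that each monomial transform and its inverse are well defined.
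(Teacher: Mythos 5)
Your proposal is correct and follows essentially the same route as the paper's own proof: Laplace transform of the equation, term-by-term transform of the $\mathtt{k}$-Struve series via $L[t^{s}:p]=\Gamma(s+1)/p^{s+1}$, expansion of $(1+d^{\upsilon}p^{-\upsilon})^{-1}$ as a geometric (binomial) series, and term-by-term inversion yielding the Mittag--Leffler function. Your added remarks on justifying the interchange of summation and integration go slightly beyond what the paper records, but the argument is the same.
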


\begin{proof}
We begin by recalling the Laplace transform of the Riemann-Liouville
fractional integral operator (see, e.g.,\cite{Erdelyi},\cite{Srivastava}): 
\begin{equation}
L\left\{D_{t}^{-\upsilon }f\left( t\right) ;p\right\} =p^{-\upsilon
}F\left( p\right)  \label{eqn-16-Struve}
\end{equation}
where $F\left( p\right) $ is defined in ($\ref{eqn-13-Struve}$). Now, Taking
the Laplace transform of both sides of ($\ref{eqn-14-Struve}$) and using \eqref{k-Struve} and ($\ref{eqn-16-Struve}$), we get 
\begin{align*}
&L\left[ N\left( t\right) ;p\right] =N_{0}L\left[ S_{l,c}^{\mathtt{k}}\left( t\right) ;p%
\right] -d^{\upsilon }L\left[D_{t}^{-\upsilon }N\left( t\right) ;p%
\right]\\
&N\left( p\right) =N_{0}\left( \int_{0}^{\infty }e^{-pt}\sum_{r=0}^{\infty }%
\frac{\left( -c\right) ^{r}}{\Gamma_{\mathtt{k}} \left( r\mathtt{k}+l+\frac{3}{2}\mathtt{k}\right) \Gamma
\left( r+\frac{3}{2}\right) }\left( \frac{t}{2}\right) ^{2r+\frac{l}{\mathtt{k}}+1}\right) dt
-d^{\upsilon }p^{-\upsilon }N\left( p\right)\\
&N\left( p\right) +d^{\upsilon }p^{-\upsilon }N\left( p\right)
=N_{0}\sum_{r=0}^{\infty }\frac{\left( -c\right) ^{r}2^{-\left(
2r+\frac{l}{\mathtt{k}}+1\right) }}{\Gamma_{\mathtt{k}}\left( r\mathtt{k}+l+\frac{3}{2}\mathtt{k}\right) \Gamma \left( r+%
\frac{3}{2}\right) }\int\limits_{0}^{\infty }e^{-pt}t^{2r+\frac{l}{\mathtt{k}}+1}dt \\
&=N_{0}\sum\limits_{r=0}^{\infty }\frac{\left( -c\right) ^{r}\left(
2\right) ^{-\left( 2r+\frac{l}{\mathtt{k}}+1\right) }}{\Gamma \left( r\mathtt{k}+l+\frac{3}{2}\mathtt{k}\right)
\Gamma \left( r+\frac{3}{2}\right) }\frac{\Gamma \left( 2r+\frac{l}{\mathtt{k}}+2\right) }{%
p^{2r+\frac{l}{\mathtt{k}}+2}}
\end{align*}%
\begin{align}
&N\left( p\right) =N_{0}\sum\limits_{r=0}^{\infty }\frac{\left( -c\right)
^{r}\left( 2\right) ^{-\left( 2r+\frac{l}{\mathtt{k}}+1\right) }\Gamma \left( 2r+\frac{l}{\mathtt{k}}+2\right) }{%
\Gamma \left( rk+l+\frac{3}{2}\mathtt{k}\right) \Gamma \left( r+\frac{3}{2}\right) }\notag \\
&\times \left\{ p^{-\left( 2r+\frac{l}{\mathtt{k}}+2\right) }\sum\limits_{s=0}^{\infty }\left(
1\right) _{s}\frac{\left[ -\left( \frac{p}{d}\right) ^{-\upsilon }\right] }{%
\left( s\right) !}\right\}  \label{eqn-17-Struve}
\end{align}

Taking inverse Laplace transform of ($\ref{eqn-17-Struve}$), and by using $%
L^{-1}\left\{ p^{-\upsilon }\right\} =\frac{t^{\upsilon -1}}{\Gamma \left(
\upsilon \right) },\mathcal{R}\left( \upsilon \right) >0$, we have%
\begin{eqnarray*}
L^{-1}\left( N\left( p\right) \right) &=&N_{0}\sum\limits_{r=0}^{\infty }%
\frac{\left( -c\right) ^{r}\left( 2\right) ^{-\left( 2r+\frac{l}{\mathtt{k}}+1\right) }\Gamma
\left( 2r+\frac{l}{\mathtt{k}}+2\right) }{\Gamma \left( rk+\frac{l}{\mathtt{k}}+\frac{3}{2}\mathtt{k}\right) \Gamma \left(
r+\frac{3}{2}\right) } \\
&&\times L^{-1}\left\{ \sum\limits_{s=0}^{\infty }\left( -1\right)
^{s}d^{\upsilon s}p^{-\left( 2r+\frac{l}{\mathtt{k}}+2+\upsilon s\right) }\right\} \\
N\left( t\right) &=&N_{0}\sum\limits_{r=0}^{\infty }\frac{\left( -c\right)
^{r}\left( 2\right) ^{-\left( 2r+\frac{l}{\mathtt{k}}+1\right) }\Gamma \left( 2r+\frac{l}{\mathtt{k}}+2\right) }{%
\Gamma \left( rk+\frac{l}{\mathtt{k}}+\frac{3}{2}\mathtt{k}\right) \Gamma \left( r+\frac{3}{2}\right) }
\\
&&\times \left\{ \sum\limits_{s=0}^{\infty }\left( -1\right) ^{s}d^{\upsilon
s}\frac{t^{\left( 2r+\frac{l}{\mathtt{k}}+1+\upsilon s\right) }}{\Gamma \left( \upsilon
s+2r+\frac{l}{\mathtt{k}}+2\right) }\right\} \\
&=&N_{0}\sum\limits_{r=0}^{\infty }\frac{\left( -c\right) ^{r}\Gamma \left(
2r+\frac{l}{\mathtt{k}}+2\right) }{\Gamma \left( rk+\frac{l}{\mathtt{k}}+\frac{3}{2}\right) \Gamma \left( r+%
\frac{3}{2}\right) }\left( \frac{t}{2}\right) ^{2r+\frac{l}{\mathtt{k}}+1} \\
&&\times \left\{ \sum\limits_{s=0}^{\infty }\left( -1\right) ^{s}d^{\upsilon
s}\frac{t^{\upsilon s}}{\Gamma \left( \upsilon s+2r+\frac{l}{\mathtt{k}}+2\right) }\right\}
\end{eqnarray*}%
which is the desired result.
\end{proof}

\begin{corollary}
If we put $\mathtt{k}=1$ in $\left( \ref{eqn-15-Struve}\right) $ then we get the solution of fractional kinetic equation involving classical Struve function as:\\
If $d>0,\upsilon >0, l, c, t\in \mathbb{C}$ and $l>-\frac{3}{2}$ then the solution of generalized fractional kinetic equation%
\begin{align*}
N\left( t\right) -N_{0}~S_{l,c}^{1}\left( t\right) :=-d^{\upsilon }\text{ }_{0}D_{t}^{-\upsilon }N\left( t\right) , 
\end{align*}
is given by
\begin{align*}
N\left( t\right) =N_{0}\sum_{r=0}^{\infty }\frac{\left( -c\right) ^{r}\Gamma
\left( 2r+l+2\right) }{\Gamma\left( r+l+\frac{3}{2}\right) \Gamma \left(
r+\frac{3}{2}\right)}\left( \frac{t}{2}\right) ^{2r+l+1}E_{v,2r+l+2}\left(
-d^{\upsilon }t^{\upsilon }\right) .  
\end{align*}
\end{corollary}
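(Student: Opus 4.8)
The plan is to obtain this Corollary as nothing more than the specialization $\mathtt{k}=1$ of Theorem~\ref{Th1}: every step of the Laplace-transform argument that produced the closed form \eqref{eqn-15-Struve} is valid for each admissible $\mathtt{k}$, so no new analysis is needed and it suffices to set $\mathtt{k}=1$ throughout the hypotheses and the conclusion. The one structural fact I would record first is that the $\mathtt{k}$-gamma function degenerates to the ordinary gamma function at $\mathtt{k}=1$: from the standard property $\Gamma_{\mathtt{k}}(x)=\mathtt{k}^{\frac{x}{\mathtt{k}}-1}\Gamma\!\left(\frac{x}{\mathtt{k}}\right)$ one reads off $\Gamma_{1}=\Gamma$, and in particular $\Gamma_{\mathtt{k}}\!\left(r\mathtt{k}+l+\tfrac{3}{2}\mathtt{k}\right)$ collapses to $\Gamma\!\left(r+l+\tfrac{3}{2}\right)$.

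Next I would trace this substitution through the defining series \eqref{k-Struve}. At $\mathtt{k}=1$ the exponent $\frac{\nu}{\mathtt{k}}$ becomes $\nu$ and the $\mathtt{k}$-gamma factor becomes an ordinary gamma factor, so $\mathtt{S}_{l,c}^{\mathtt{k}}$ reduces to $\mathtt{S}_{l,c}^{1}$, which is precisely the $c$-parametrized form of the classical Struve function \eqref{Struve-1} (recovering $H_{l}$ exactly when $c=1$). Consequently the generalized kinetic equation \eqref{eqn-14-Struve} turns into the stated equation with driving term $N_{0}\,\mathtt{S}_{l,c}^{1}(t)$, and the admissibility condition $l>-\frac{3}{2}\mathtt{k}$ specializes to $l>-\frac{3}{2}$, matching the hypotheses of the Corollary.

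I would then apply the same $\mathtt{k}=1$ substitution to the solution \eqref{eqn-15-Struve}. Each occurrence of $\frac{l}{\mathtt{k}}$ becomes $l$, so the power $\left(\frac{t}{2}\right)^{2r+\frac{l}{\mathtt{k}}+1}$ becomes $\left(\frac{t}{2}\right)^{2r+l+1}$, the prefactor $\Gamma\!\left(2r+\frac{l}{\mathtt{k}}+2\right)$ becomes $\Gamma(2r+l+2)$, and the second Mittag-Leffler index $2r+\frac{l}{\mathtt{k}}+2$ becomes $2r+l+2$; combined with the denominator reduction $\Gamma_{\mathtt{k}}\!\left(r\mathtt{k}+l+\tfrac{3}{2}\mathtt{k}\right)\to\Gamma\!\left(r+l+\tfrac{3}{2}\right)$, this yields exactly the asserted series with $E_{\upsilon,2r+l+2}$ still the two-parameter function \eqref{Mittag2}.

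Since the result is a clean parameter specialization, I do not expect any genuine obstacle. The only point demanding care is the bookkeeping of the $\mathtt{k}$-gamma reduction together with the verification that the exponent of $t/2$ and the second index of the Mittag-Leffler factor transform consistently, so that no stray factor of $\mathtt{k}$ survives; once that is checked termwise, the Corollary follows immediately from Theorem~\ref{Th1}.
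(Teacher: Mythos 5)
Your proposal is correct and coincides with the paper's treatment: the corollary is obtained exactly as you describe, by setting $\mathtt{k}=1$ in the solution formula of Theorem 1, using $\Gamma_{1}=\Gamma$ so that $\Gamma_{\mathtt{k}}\left(r\mathtt{k}+l+\tfrac{3}{2}\mathtt{k}\right)$ becomes $\Gamma\left(r+l+\tfrac{3}{2}\right)$ and each $\tfrac{l}{\mathtt{k}}$ becomes $l$. The paper offers no separate argument beyond this specialization, so your termwise bookkeeping is all that is required.
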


\begin{theorem}
\label{Th2}
If $d>0,\upsilon >0,c,l,t\in \mathbb{C}$ and $l>-\frac{3}{2}$, then the equation %
\begin{equation}
N\left( t\right) -N_{0}S_{l,c}^{\mathtt{k}}\left( d^{\upsilon }t^{\upsilon }\right)
=-d^{\upsilon }{}_{0}D_{t}^{-\upsilon }N\left( t\right) 
\label{eqn-18-Struve}
\end{equation}%
have the following solution %
\begin{align}
&N\left( t\right) =N_{0}\sum\limits_{r=0}^{\infty }\frac{\left( -c\right)
^{r}\Gamma \left( 2r\upsilon +\upsilon l+\upsilon +1\right) }{\Gamma_{k} \left( r\mathtt{k}+l+\frac{%
3}{2}\mathtt{k}\right) \Gamma \left( r+\frac{3}{2}\right) }\left( \frac{d^{\upsilon }%
}{2}\right) ^{2r+l+1}t^{2\upsilon r+l \upsilon+\upsilon}\notag\\
&\times E_{\upsilon,2r\upsilon+l\upsilon+\upsilon+1}\left(
-d^{\upsilon }t^{\upsilon }\right)   \label{eqn-19-Struve}
\end{align}%
where $E_{\upsilon,2r\upsilon +l\upsilon+\upsilon+1}\left( .\right) $ is the generalized
Mittag-Leffler function $\left( \ref{Mittag2}\right) $.
\end{theorem}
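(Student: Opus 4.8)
The plan is to follow the Laplace-transform scheme used in the proof of Theorem~\ref{Th1}, the only genuinely new ingredient being the transform of the composite-argument term $S_{l,c}^{\mathtt{k}}\left(d^{\upsilon}t^{\upsilon}\right)$. First I would apply the Laplace transform to both sides of \eqref{eqn-18-Struve}, writing $N(p)=L[N(t);p]$, and dispose of the right-hand side immediately via the operational rule \eqref{eqn-16-Struve}, which gives $L\{D_{t}^{-\upsilon}N(t);p\}=p^{-\upsilon}N(p)$. This reduces the equation to the algebraic relation $N(p)+d^{\upsilon}p^{-\upsilon}N(p)=N_{0}\,L\left[S_{l,c}^{\mathtt{k}}(d^{\upsilon}t^{\upsilon});p\right]$.

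The core computation is then the transform of the inhomogeneous term. Inserting the series definition \eqref{k-Struve} with $\nu=l$ and $x=d^{\upsilon}t^{\upsilon}$, I would factor out the $t$-independent constants and reduce each summand to the elementary integral $\int_{0}^{\infty}e^{-pt}\,t^{\upsilon\left(2r+\frac{l}{\mathtt{k}}+1\right)}\,dt=\Gamma\!\left(\upsilon\left(2r+\tfrac{l}{\mathtt{k}}+1\right)+1\right)p^{-\upsilon\left(2r+\frac{l}{\mathtt{k}}+1\right)-1}$, valid for $\mathcal{R}(p)>0$; the term-by-term integration is justified by the locally uniform convergence of the defining series. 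It is precisely the factor $t^{\upsilon}$ in the argument that promotes the exponent from $2r+\frac{l}{\mathtt{k}}+2$ (as in Theorem~\ref{Th1}) to $\upsilon\left(2r+\frac{l}{\mathtt{k}}+1\right)+1$, and this is the source of the $\upsilon$-scaled parameters appearing in \eqref{eqn-19-Struve}.

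Solving the relation for $N(p)$ and expanding the resulting factor as the geometric series $\left(1+d^{\upsilon}p^{-\upsilon}\right)^{-1}=\sum_{s=0}^{\infty}(-1)^{s}d^{\upsilon s}p^{-\upsilon s}$, legitimate in the half-plane where $\left|d^{\upsilon}p^{-\upsilon}\right|<1$, I would obtain a double series in $r$ and $s$ whose general term is a pure negative power of $p$. Applying the inversion $L^{-1}\{p^{-\alpha}\}=t^{\alpha-1}/\Gamma(\alpha)$ to each term and resumming the inner $s$-series then produces, for each fixed $r$, exactly the generalized Mittag-Leffler function $E_{\upsilon,\,2r\upsilon+l\upsilon+\upsilon+1}(-d^{\upsilon}t^{\upsilon})$ of \eqref{Mittag2}, yielding \eqref{eqn-19-Struve}.

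The step I expect to demand the most care is the bookkeeping of exponents: one must track the $\upsilon$-multiplied powers through the transform, the geometric expansion, and the inversion so that the second index of the Mittag-Leffler function emerges correctly as $2r\upsilon+l\upsilon+\upsilon+1$. The analytic content (interchange of sum and integral, and the region of validity of the geometric expansion) is routine, so the principal risk is one of careful index tracking; in particular, reconciling the $\frac{l}{\mathtt{k}}$ that appears in \eqref{k-Struve} with the $l$ written in \eqref{eqn-19-Struve} is part of that same bookkeeping and should be checked explicitly.
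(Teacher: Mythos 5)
Your proposal is correct and follows exactly the route the paper intends: the paper gives no details for Theorem~\ref{Th2}, stating only that it is proved in parallel with Theorem~\ref{Th1}, and your Laplace-transform computation (transform the composite-argument series term by term, solve algebraically for $N(p)$, expand $\left(1+d^{\upsilon}p^{-\upsilon}\right)^{-1}$ geometrically, and invert term by term) is precisely that parallel argument. Your closing caveat is well placed: carrying the bookkeeping through honestly produces $\frac{l}{\mathtt{k}}$ rather than $l$ in the power of $\frac{d^{\upsilon}}{2}$, in the power of $t$, in the Gamma factor, and in the second Mittag-Leffler index, so the printed form of \eqref{eqn-19-Struve} matches your derivation only after replacing $l$ by $\frac{l}{\mathtt{k}}$ there (or specializing to $\mathtt{k}=1$).
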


\begin{proof}
Theorem \ref{Th2} can be proved in parallel with the proof of Theorem \ref%
{Th1}. So the details of proofs are omitted.
\end{proof}

\begin{corollary}
By putting $\mathtt{k}=1$ in Theorem \eqref{Th2}, we get the solution of fractional kinetic equation involving classical Struve function:
If $d>0,\upsilon >0,c,l,t\in \mathbb{C}$ and $l>-\frac{3}{2}$, then the equation %
\begin{equation*}
N\left( t\right) -N_{0}S_{l,c}^{1}\left( d^{\upsilon }t^{\upsilon }\right)
=-d^{\upsilon }{}_{0}D_{t}^{-\upsilon }N\left( t\right) 
\end{equation*}%

\begin{align}
&N\left( t\right) =N_{0}\sum\limits_{r=0}^{\infty }\frac{\left( -c\right)
^{r}\Gamma \left( 2r\upsilon +\upsilon l+\upsilon +1\right) }{\Gamma \left( r+l+\frac{%
3}{2}\right) \Gamma \left( r+\frac{3}{2}\right) }\left( \frac{d^{\upsilon }%
}{2}\right) ^{2r+l+1}t^{2\upsilon r+l \upsilon+\upsilon}\notag\\
&\times E_{\upsilon, 2r\upsilon+l\upsilon+\upsilon+1}\left(
-d^{\upsilon }t^{\upsilon }\right)   \label{eqn-20-Struve}
\end{align}%
\end{corollary}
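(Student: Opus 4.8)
The plan is to follow verbatim the Laplace-transform machinery of Theorem~\ref{Th1}; the sole new ingredient is the dilated argument $d^{\upsilon}t^{\upsilon}$ inside the $\mathtt{k}$-Struve function, which rescales the powers of $t$ fed into the Laplace integral. First I would apply $L[\,\cdot\,;p]$ to both sides of \eqref{eqn-18-Struve}, invoking linearity together with the fractional-integral transform \eqref{eqn-16-Struve}, to reach
\[
N(p)=N_{0}\,L\!\left[S_{l,c}^{\mathtt{k}}\!\left(d^{\upsilon}t^{\upsilon}\right);p\right]-d^{\upsilon}p^{-\upsilon}N(p).
\]

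The key computation is the transform of the composite function. Inserting the series \eqref{k-Struve} evaluated at $x=d^{\upsilon}t^{\upsilon}$ and integrating term by term pulls out the constant $\left(\tfrac{d^{\upsilon}}{2}\right)^{2r+l+1}$ and leaves the integral $\int_{0}^{\infty}e^{-pt}t^{\upsilon(2r+l+1)}\,dt$. Applying $\int_{0}^{\infty}e^{-pt}t^{a}\,dt=\Gamma(a+1)\,p^{-(a+1)}$ with $a=\upsilon(2r+l+1)$ shows that, in contrast with Theorem~\ref{Th1} where the argument was simply $t$, each power of $t$ is now dilated by $\upsilon$; consequently the numerator acquires the factor $\Gamma\!\left(2r\upsilon+l\upsilon+\upsilon+1\right)$ and the transform carries $p^{-(2r\upsilon+l\upsilon+\upsilon+1)}$.

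Next I would solve the algebraic identity for $N(p)$ by collecting $N(p)\bigl(1+d^{\upsilon}p^{-\upsilon}\bigr)$ on the left and expanding the reciprocal as the geometric series $\bigl(1+d^{\upsilon}p^{-\upsilon}\bigr)^{-1}=\sum_{s=0}^{\infty}(-1)^{s}d^{\upsilon s}p^{-\upsilon s}$, valid for $\lvert d^{\upsilon}p^{-\upsilon}\rvert<1$. Inverting term by term via $L^{-1}\{p^{-\sigma}\}=t^{\sigma-1}/\Gamma(\sigma)$ with $\sigma=2r\upsilon+l\upsilon+\upsilon+1+\upsilon s$, and summing over $s$, I would identify the inner series with the generalized Mittag-Leffler function \eqref{Mittag2}, namely
\[
\sum_{s=0}^{\infty}\frac{\left(-d^{\upsilon}t^{\upsilon}\right)^{s}}{\Gamma\!\left(\upsilon s+2r\upsilon+l\upsilon+\upsilon+1\right)}=E_{\upsilon,\,2r\upsilon+l\upsilon+\upsilon+1}\!\left(-d^{\upsilon}t^{\upsilon}\right),
\]
which, after restoring the prefactor $t^{2\upsilon r+l\upsilon+\upsilon}$, is exactly \eqref{eqn-19-Struve}.

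Every step is routine Laplace-transform calculus, so I do not expect a genuine obstacle; the one place deserving care—and the sole point where the argument departs from Theorem~\ref{Th1}—is the bookkeeping forced by the substitution $t\mapsto d^{\upsilon}t^{\upsilon}$. One must ensure that each occurrence of the exponent $2r+l+1$ is multiplied by $\upsilon$ \emph{before} it enters a Gamma argument, so that the numerator factor $\Gamma(2r\upsilon+l\upsilon+\upsilon+1)$ and the second Mittag-Leffler index $2r\upsilon+l\upsilon+\upsilon+1$ emerge in agreement. For full rigor I would also record that the termwise integration is licensed by the absolute convergence of the $\mathtt{k}$-Struve series under $l>-\tfrac{3}{2}$, and that the geometric expansion is performed inside its strip of convergence, the resulting series defining $N(t)$ by analytic continuation for all $t>0$.
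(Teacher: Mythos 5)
Your proposal is correct and follows the same route the paper intends: the paper obtains this corollary by setting $\mathtt{k}=1$ in Theorem \ref{Th2}, whose proof it declares ``parallel to Theorem \ref{Th1}'' and omits, and your Laplace-transform derivation with the dilated argument $d^{\upsilon}t^{\upsilon}$ is exactly that omitted computation, correctly producing the factor $\Gamma(2r\upsilon+l\upsilon+\upsilon+1)$ and the Mittag-Leffler index $2r\upsilon+l\upsilon+\upsilon+1$. The only cosmetic remark is that you stop at the general formula \eqref{eqn-19-Struve} and leave the final specialization $\Gamma_{\mathtt{k}}(r\mathtt{k}+l+\tfrac{3}{2}\mathtt{k})\big|_{\mathtt{k}=1}=\Gamma(r+l+\tfrac{3}{2})$ implicit, which is immediate.
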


\begin{theorem}
\label{Th3}If $d>0,\upsilon >0,c,l,t\in \mathbb{C}$ ,$\mathfrak{a}\neq d$
and $l>-\frac{3}{2}$, then the solution of the following equation%
\begin{equation}
N\left( t\right) -N_{0}S_{l,c}^{\mathtt{k}}\left( d^{\upsilon }t^{\upsilon }\right) =-%
\mathfrak{a}^{\upsilon }{}_{0}D_{t}^{-\upsilon }N\left( t\right) 
\label{eqn-23-Struve}
\end{equation}
hold the formula%
\begin{eqnarray}
N\left( t\right) &=&N_{0}\sum\limits_{r=0}^{\infty }\frac{\left( -c\right)
^{r}\Gamma \left( 2r\upsilon +\upsilon l+\upsilon +1\right) }{\Gamma_{\mathtt{k}} \left(
r\mathtt{k}+l+\frac{b+2}{2}\right) \Gamma \left( r+\frac{3}{2}\right) }\left( \frac{%
d^{\upsilon }}{2}\right) ^{2r+l+1}  \notag \\
&&\times t^{\upsilon \left( 2r+l+1\right) }E_{\upsilon, 2r\upsilon +l\upsilon +\upsilon+1}
\left( -\mathfrak{a}^{\upsilon }t^{\upsilon }\right)
\label{eqn-24-Struve}
\end{eqnarray}
\end{theorem}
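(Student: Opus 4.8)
The plan is to mirror the Laplace-transform argument of Theorem~\ref{Th1}, the only structural novelty being that the constant $\mathfrak{a}$ appearing in the fractional integral operator on the right of \eqref{eqn-23-Struve} differs from the constant $d$ sitting inside the argument of the $\mathtt{k}$-Struve function on the left. First I would apply the Laplace transform to both sides of \eqref{eqn-23-Struve}; writing $N(p)=L[N(t);p]$ and invoking the operational rule \eqref{eqn-16-Struve}, namely $L\{{}_{0}D_{t}^{-\upsilon}N(t);p\}=p^{-\upsilon}N(p)$, the right-hand side contributes $-\mathfrak{a}^{\upsilon}p^{-\upsilon}N(p)$, so that after collecting the $N(p)$ terms one obtains
\begin{equation*}
N(p)\left(1+\mathfrak{a}^{\upsilon}p^{-\upsilon}\right)=N_{0}\,L\!\left[S_{l,c}^{\mathtt{k}}\left(d^{\upsilon}t^{\upsilon}\right);p\right].
\end{equation*}

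Next I would compute the transform on the right explicitly. Substituting $x=d^{\upsilon}t^{\upsilon}$ into the series \eqref{k-Struve} and integrating termwise against $e^{-pt}$, each summand reduces to a Gamma integral of the form $\int_{0}^{\infty}e^{-pt}t^{\upsilon(2r+l+1)}\,dt=\Gamma\!\left(2r\upsilon+\upsilon l+\upsilon+1\right)p^{-\upsilon(2r+l+1)-1}$, which produces precisely the numerator factor $\Gamma(2r\upsilon+\upsilon l+\upsilon+1)$ and the $(d^{\upsilon}/2)^{2r+l+1}$ prefactor recorded in \eqref{eqn-24-Struve}. Dividing through by $1+\mathfrak{a}^{\upsilon}p^{-\upsilon}$ and expanding the reciprocal as the geometric series $\sum_{s=0}^{\infty}(-1)^{s}\mathfrak{a}^{\upsilon s}p^{-\upsilon s}$, valid in the half-plane where $|\mathfrak{a}^{\upsilon}p^{-\upsilon}|<1$, I arrive at a double series for $N(p)$ in which every power of $p$ has the pure monomial form $p^{-\upsilon(2r+l+1)-1-\upsilon s}$.

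Then I would invert term by term using $L^{-1}\{p^{-\mu}\}=t^{\mu-1}/\Gamma(\mu)$, exactly as in the proof of Theorem~\ref{Th1}. The inner $s$-sum collapses, by the definition \eqref{Mittag2}, into the generalized Mittag-Leffler function $E_{\upsilon,\,2r\upsilon+l\upsilon+\upsilon+1}\left(-\mathfrak{a}^{\upsilon}t^{\upsilon}\right)$; the essential point distinguishing this result from Theorem~\ref{Th2} is that, because the destruction constant is now $\mathfrak{a}$, it is $\mathfrak{a}^{\upsilon}$ and \emph{not} $d^{\upsilon}$ that appears inside the argument of the Mittag-Leffler function, whereas $d^{\upsilon}$ survives only in the prefactor. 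Reassembling the $r$-sum then yields exactly \eqref{eqn-24-Struve}, and the specialization $\mathfrak{a}=d$ recovers Theorem~\ref{Th2}, which serves as a built-in consistency check.

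The Gamma-integral evaluations are entirely routine. The step demanding the most care is the justification of the two interchanges of summation with integration, first with the Laplace integral and then with the inverse transform, carried out jointly with the geometric expansion. These rest on the hypothesis $l>-\tfrac{3}{2}$ (together with $d>0$, $\upsilon>0$), which keeps the Gamma arguments well defined and the individual transform integrals convergent, and on performing the inversion in the region $\mathcal{R}(p)$ large enough that $|\mathfrak{a}^{\upsilon}p^{-\upsilon}|<1$, so that the geometric series converges before the term-by-term inversion is invoked. The hypothesis $\mathfrak{a}\neq d$ plays no analytic role in the convergence but simply signals that the governing constant and the source constant are genuinely distinct, which is what makes \eqref{eqn-24-Struve} a proper generalization of \eqref{eqn-19-Struve}.
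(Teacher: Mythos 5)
Your proposal is correct and follows exactly the route the paper intends: the paper gives no explicit proof of Theorem~\ref{Th3} (it is left to be carried out ``in parallel'' with Theorem~\ref{Th1}), and your Laplace-transform argument --- transforming \eqref{eqn-23-Struve}, evaluating the termwise Gamma integrals, expanding $\left(1+\mathfrak{a}^{\upsilon}p^{-\upsilon}\right)^{-1}$ as a geometric series, and inverting term by term to assemble the Mittag-Leffler function with argument $-\mathfrak{a}^{\upsilon}t^{\upsilon}$ --- is precisely that template. Your observation that $\mathfrak{a}$ replaces $d$ only in the Mittag-Leffler argument while $d^{\upsilon}$ survives in the prefactor is the one genuinely new point relative to Theorem~\ref{Th2}, and you have it right.
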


\begin{corollary}
\label{Cor3}
If we set $\mathtt{k}=1$ then \eqref{eqn-24-Struve} reduced as follows:
\begin{eqnarray}
N\left( t\right) &=&N_{0}\sum\limits_{r=0}^{\infty }\frac{\left( -c\right)
^{r}\Gamma \left( 2r\upsilon +\upsilon l+\upsilon +1\right) }{\Gamma \left(
r+l+\frac{b+2}{2}\right) \Gamma \left( r+\frac{3}{2}\right) }\left( \frac{%
d^{\upsilon }}{2}\right) ^{2r+l+1}  \notag \\
&&\times t^{\upsilon \left( 2r+l+1\right) }E_{\upsilon, 2r\upsilon +l\upsilon +\upsilon+1}
\left( -\mathfrak{a}^{\upsilon }t^{\upsilon }\right)
\end{eqnarray}
\end{corollary}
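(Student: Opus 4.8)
The plan is to obtain this corollary as a direct specialization of formula \eqref{eqn-24-Struve} from Theorem \ref{Th3}, setting $\mathtt{k}=1$. The essential ingredient is the relationship between the $\mathtt{k}$-gamma function and the classical gamma function. Recalling the Díaz--Pariguan definition, one has $\Gamma_{\mathtt{k}}(x)=\mathtt{k}^{\,x/\mathtt{k}-1}\,\Gamma\!\left(x/\mathtt{k}\right)$, so that in particular $\Gamma_{1}(x)=1^{\,x-1}\,\Gamma(x)=\Gamma(x)$ for every admissible $x$. This is the only nontrivial fact I would need to invoke, and it is a standard identity for the $\mathtt{k}$-deformed gamma function used to define $\mathtt{S}_{\nu,c}^{\mathtt{k}}$ in \eqref{k-Struve}.

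The first step is to inspect the summand of \eqref{eqn-24-Struve} and locate every occurrence of the parameter $\mathtt{k}$. Going through the factors $(-c)^{r}$, $\Gamma(2r\upsilon+\upsilon l+\upsilon+1)$, $\Gamma(r+\tfrac{3}{2})$, $(d^{\upsilon}/2)^{2r+l+1}$, $t^{\upsilon(2r+l+1)}$, and the Mittag--Leffler factor $E_{\upsilon,\,2r\upsilon+l\upsilon+\upsilon+1}(-\mathfrak{a}^{\upsilon}t^{\upsilon})$, one checks that none of these depends on $\mathtt{k}$; the single $\mathtt{k}$-dependent ingredient is the denominator factor $\Gamma_{\mathtt{k}}\!\left(r\mathtt{k}+l+\tfrac{b+2}{2}\right)$. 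In particular, the order and the second index of the Mittag--Leffler function, as well as the exponent $\upsilon(2r+l+1)$ of $t$, are manifestly free of $\mathtt{k}$, so no hidden $\mathtt{k}$-dependence survives outside that one gamma factor.

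The second step is the substitution itself. Putting $\mathtt{k}=1$ collapses the argument $r\mathtt{k}+l+\tfrac{b+2}{2}$ to $r+l+\tfrac{b+2}{2}$, and by the identity $\Gamma_{1}=\Gamma$ the factor $\Gamma_{\mathtt{k}}\!\left(r\mathtt{k}+l+\tfrac{b+2}{2}\right)$ becomes $\Gamma\!\left(r+l+\tfrac{b+2}{2}\right)$. Every other factor in \eqref{eqn-24-Struve} is carried over unchanged, which yields precisely the asserted expression for $N(t)$. Since the whole argument is a term-by-term substitution inside an already convergent series (guaranteed by Theorem \ref{Th3} under the hypotheses $d>0$, $\upsilon>0$, $\mathfrak{a}\neq d$, and $l>-\tfrac{3}{2}$, which are preserved at $\mathtt{k}=1$), there is no genuine analytical obstacle. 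The only point demanding care is the bookkeeping verification of the previous paragraph, namely confirming that the reduction $\Gamma_{\mathtt{k}}\to\Gamma$ is the \emph{sole} effect of setting $\mathtt{k}=1$ and that the remaining parameters of the Mittag--Leffler function are untouched.
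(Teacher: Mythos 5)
Your proposal is correct and matches the paper's (implicit) route exactly: the corollary is obtained by direct term-by-term substitution of $\mathtt{k}=1$ into \eqref{eqn-24-Struve}, with the reduction $\Gamma_{1}(x)=\Gamma(x)$ of the Díaz--Pariguan $\mathtt{k}$-gamma function handling the sole $\mathtt{k}$-dependent factor $\Gamma_{\mathtt{k}}\bigl(r\mathtt{k}+l+\tfrac{b+2}{2}\bigr)$. Your explicit verification that no other factor depends on $\mathtt{k}$ is more careful than the paper, which states the corollary without proof.
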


\section{Graphical interpretation}
In this section we plot the graphs of our solutions of the kinetic
equation, which is established in \eqref{eqn-15-Struve}. In
each graph, we gave four solutions of the results on the basis of
assigning different values to the parameters.In figures 1, we take 
$\mathtt{k}=1$ and $\upsilon=0.5,1,1.5, 2$. Similarly figures 2-4 are plotted
respectively by taking $\mathtt{k}=2,3,4$. Figures 5-6 are plotted by considering 
$\upsilon=0.1,0.2,0.3, 0.4$ and for different values of $\mathtt{k}$. 
In this graphs, all other parameters are fixed by 1. 
It is clear from these figures that $N_{t}>0$ for $t > 0$ and
$N_{t}>0$ is monotonic increasing function for $t\in (0,\infty)$. 
In this study, we choose first 50 terms of Mittag-Leffler function and first
50 terms of our solutions to plot the graphs. $N_{t}=0$ , when
$t>0$ and $N_{t}\rightarrow\infty$ when $t\rightarrow1$ for all values of the parameters.

%\begin{figure}[H]
	%\centering
		%\includegraphics{keq1_1.eps}
	%\caption{Solution \eqref{eqn-15-Struve} for $N(t),\mathtt{k}=1$}
	%\label{fig:keq1_1}
%\end{figure}
%
%\begin{figure}[H]
	%\centering
		%\includegraphics{keq2.eps}
	%\caption{Solution \eqref{eqn-15-Struve} for $N(t),\mathtt{k}=2$}
	%\label{fig:keq2}
%\end{figure}
%
%\begin{figure}[H]
	%\centering
		%\includegraphics{keq3.eps}
	%\caption{Solution \eqref{eqn-15-Struve} for $N(t),\mathtt{k}=3$}
	%\label{fig:keq3}
%\end{figure}
%
%\begin{figure}[H]
	%\centering
		%\includegraphics{k4.eps}
	%\caption{Solution \eqref{eqn-15-Struve} for $N(t),\mathtt{k}=4$}
	%\label{fig:k4}
%\end{figure}
%
%\begin{figure}[H]
	%\centering
		%\includegraphics{keq2_2.eps}
	%\caption{Solution \eqref{eqn-15-Struve} for $N(t),\mathtt{k}=2$}
	%\label{fig:keq2_2}
%\end{figure}
%
%\begin{figure}[H]
	%\centering
		%\includegraphics{keq3_2.eps}
	%\caption{Solution \eqref{eqn-15-Struve} for $N(t),\mathtt{k}=3$}
	%\label{fig:keq3_2}
%\end{figure}

\end{document}